\documentclass[11pt]{amsart}
\usepackage{amssymb,latexsym,amsmath,graphicx,graphics,epic,eepic}

\theoremstyle{plain}
\newtheorem{theorem}{Theorem}
\numberwithin{theorem}{section}
\newtheorem{lemma}[theorem]{Lemma}

\theoremstyle{definition}

\newtheorem{question}[theorem]{Question}
\newtheorem{remark}[theorem]{Remark}

\newtheorem{acknowledgments}{Acknowledgments\ignorespaces}

\newcommand{\C}{{\mathbb C}}

\newcommand{\Z}{{\mathbb Z}}

\renewcommand{\P}{{\mathbb P}}
\newcommand{\s}{{\mathbb S}}

\begin{document}
\title[Self-intersection of rational unicuspidal curves]
{A note on the self-intersection of rational unicuspidal curve with one Puiseux pair}
\author{Weimin Chen, Yusupjan Ouyang and Sam Silver}
\subjclass[2000]{}
\keywords{}
\date{\today}
\maketitle
\begin{abstract}
For any $(p,q)$, we proved the existence of a rational unicuspidal curve with one Puiseux pair $(p,q)$ in an algebraic surface, with a self-intersection which realizes the upper bound $m_{p,q}$ conjectured by the first-named author in \cite{C}, thus strengthening the symplectic version of the result in \cite{C}.
These  ``optimal" curves were obtained by examining two infinite families of rational bicuspidal curves, one in $\C\P^2$ and one in $\C\P^1\times \C\P^1$. In order to facilitate the computations, we derived certain recursive identities, and as a byproduct, we obtained a new formula for the bound $m_{p,q}$, which is more amenable to computations and gives us better insight concerning the nature of the bound. As a byproduct of this investigation, a formula for the last entry of the multiplicity sequence of the singularity was also found. 
\end{abstract}

\section{Introduction}

In \cite{C}, motivated by a question in symplectic geometry, the first-named author proposed an upper bound for the self-intersection of a rational unicuspidal curve with one Puiseux pair in any algebraic surface, which depends only on the Puiseux pair of the singularity. More concretely, for any 
$1<p<q$ where $p,q$ are co-prime, we set 
$$
m_{p,q}:=pq +\max (\lfloor \frac{p}{p-p^\prime}\rfloor, \lfloor \frac{q}{q-q^\prime}\rfloor), 
$$
where the integers $p^\prime,q^\prime$ are uniquely determined by the conditions 
$$
1\leq p^\prime<p, 1\leq q^\prime<q, \mbox{ and } pq^\prime+qp^\prime=pq+1.
$$
Then it is conjectured that for a rational unicuspidal curve $C$ with one Puiseux pair $(p,q)$ in any algebraic surface $X$, one must have $C\cdot C\leq m_{p,q}$ (cf. \cite{C}, Conjecture 1.15). 

In this note, we first present a formula for the bound $m_{p,q}$, which is more friendly for computation than the definition of $m_{p,q}$ and gives better insight for $m_{p,q}$ in connection with the structure of the cuspidal singularity. 

\begin{theorem}
Let $\ell(p,q)>1$ be the last entry of the multiplicity sequence of a cuspidal singularity with one Puiseux pair $(p,q)$. Then
\begin{itemize}
\item [{(1)}] $m_{p,q}=\left\{ \begin{array}{ll}
pq+\ell(p,q)+1 & \mbox{ if $q=p+1$ or $p>2$ and $q=-1 \pmod{p}$}\\
pq+\ell(p,q)  & \mbox{ in all other cases. } 
\end{array}
\right.$
\item [{(2)}] $\ell(p,q)=\lfloor \frac{q-1}{\min(u,q-u)} \rfloor$, where $up=1\pmod{q}$, $1<u<q$.
\end{itemize}
\end{theorem}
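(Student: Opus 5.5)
Throughout, $q/p=[a_0;a_1,\dots,a_n]$ is the continued fraction expansion with $a_n\geq 2$, and $h_j/k_j$ are the convergents. I would prove (2) first and then get (1) by rewriting both floor terms in $m_{p,q}$ through the same convergents.

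\emph{Step 1: identify $\ell(p,q)$.} The multiplicity sequence of a cusp with one Puiseux pair $(p,q)$ comes from the Euclidean algorithm $q=a_0p+r_1$, $p=a_1r_1+r_2,\dots$, $r_{n-1}=a_nr_n$, $r_n=1$. The value $p$ is repeated $a_0$ times, then $r_1$ is repeated $a_1$ times, and so on; the trailing $1$'s are dropped. The last remainder $r_{n-1}$ that exceeds $1$ equals $a_n$, so $\ell(p,q)=a_n$.

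\emph{Step 2: identify $u$ and prove (2).} The defining relation $pq'+qp'=pq+1$ forces $pq'\equiv 1\pmod q$ and $qp'\equiv1\pmod p$. Hence $q'=u$, and $p'=v$ with $vq\equiv1\pmod p$, $1\le v<p$. The classical identity $h_nk_{n-1}-h_{n-1}k_n=(-1)^{n-1}$, with $h_n=q$ and $k_n=p$, gives $p\,h_{n-1}\equiv \pm1 \pmod q$. Therefore $\{u,q-u\}=\{h_{n-1},\,q-h_{n-1}\}$, and likewise $\{v,p-v\}=\{k_{n-1},\,p-k_{n-1}\}$. The sign is $(-1)^n$, so the parity of $n$ decides which of $q-u$ and $p-v$ is the small convergent. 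Since $a_n\ge2$, we have $q=a_nh_{n-1}+h_{n-2}>2h_{n-1}$, so $\min(u,q-u)=h_{n-1}$. Then
$$\frac{q-1}{h_{n-1}}=a_n+\frac{h_{n-2}-1}{h_{n-1}},\qquad 0\le h_{n-2}<h_{n-1},$$
which gives $\lfloor (q-1)/h_{n-1}\rfloor=a_n=\ell(p,q)$ whenever $h_{n-2}\ge1$. The degenerate cases with $h_{n-2}=0$ or with very short expansions (such as $n\le1$ and $p=2$) I would check directly by hand.

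\emph{Step 3: prove (1).} In $m_{p,q}$ the two terms are $\lfloor q/(q-u)\rfloor$ and $\lfloor p/(p-v)\rfloor$. By the parity discussion, one denominator is a small convergent ($h_{n-1}$ or $k_{n-1}$) and the other is the complementary large number ($p-k_{n-1}$ or $q-h_{n-1}$). The large one exceeds half the numerator, so its floor is $1$, and the maximum is the floor with the small denominator. Using $q=a_nh_{n-1}+h_{n-2}$ and $p=a_nk_{n-1}+k_{n-2}$, that floor equals
\begin{itemize}
\item $a_n$, when the remainder $h_{n-2}$ (respectively $k_{n-2}$) is less than the denominator, which is the generic case;
\item $a_n+1$, exactly when $h_{n-2}=h_{n-1}$ (respectively $k_{n-2}=k_{n-1}$).
\end{itemize}

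\emph{Main obstacle.} I expect the hardest part to be pinning down exactly when the equality $k_{n-2}=k_{n-1}$ (or its analogue for $h$) occurs, and matching it with the stated exceptions. The equality forces $k_{n-1}=k_{n-2}=1$, which means $a_1=1$ and $n=2$, that is, $q/p=[a_0;1,p-1]$, i.e.\ $q\equiv-1\pmod p$ with $p>2$. The case $q=p+1$ gives $[1;p]$, which falls under the $n=1$ short expansions. For $p=2$, the expansion $[a_0;1,1]$ is not in normal form, which explains why $p>2$ is excluded. I would therefore enumerate the expansions of length $n\le2$ explicitly, and separately confirm that for $n\ge3$ one has $k_{n-2}<k_{n-1}$ and $h_{n-2}<h_{n-1}$, so that only the generic value $a_n$ appears.
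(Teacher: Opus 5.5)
Your argument is correct, and it takes a genuinely different route from the paper.

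\textbf{The paper's route.} The paper never introduces continued fractions. It sets $R_{p,q}=pq-M_{p,q}$ and $B_{p,q}=m_{p,q}-pq$, and then proceeds in four steps.
\begin{itemize}
\item It shows $R$ is invariant under the blow-up reduction $(p,q)\mapsto(p,q-p)$, so that $R_{p,q}=\ell(p,q)$.
\item It tracks $p',q'$ through the same reduction to show that $B$ drops by $0$ or $1$, with the drop characterized exactly (Lemma 2.2).
\item It proves $\Delta_{p,q}=B_{p,q}-R_{p,q}\in\{0,1\}$ by induction along the Euclidean algorithm (Lemma 2.3).
\item It obtains part (2) only indirectly. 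Lemma 2.4 expresses $B_{a,b}$ through the inverse $u$, and this is applied to the auxiliary pair $(a,b)=(q,q+p)$, for which $\Delta_{a,b}=0$ and $R_{a,b}=R_{p,q}$.
\end{itemize}

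\textbf{Your route.} You get everything in closed form at once. You have $\ell=a_n$. The numbers $q'=u$ and $p'=v$ are read off from the penultimate convergent $h_{n-1}/k_{n-1}$ via the determinant identity. This gives $B_{p,q}\in\{a_n,a_n+1\}$. The exceptional cases appear as the only coincidences of consecutive convergent terms: $h_{-1}=h_0$ exactly when $q/p=[1;p]$, and $k_0=k_1$ exactly when $q/p=[a_0;1,p-1]$. This explains the exceptions more transparently, and it proves (2) directly, including the identification $\min(u,q-u)=h_{n-1}$.

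\textbf{What each buys.} Your approach is more direct and more explanatory. The paper's recursive route produces identities such as $R_{p,q}=R_{p,q-p}=R_{q-p,q}$ and the precise behaviour of $\Delta$. These are reused essentially verbatim in Section 3 to compute the gaps for the two bicuspidal families, so the detour does double duty there.

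\textbf{Minor slips.} Two small inaccuracies do not affect your argument.
\begin{itemize}
\item The large-denominator floor is not always $1$. For $p=2$ (hence $n=1$) one gets $\lfloor p/(p-k_0)\rfloor=2$. However, that floor is always at most $2\le a_n$, so the maximum is unchanged.
\item In Step 2 you only have $1\le h_{n-2}\le h_{n-1}$ for $n\ge 1$, with equality exactly when $q=p+1$, rather than strict inequality. This still gives $0\le (h_{n-2}-1)/h_{n-1}<1$. The case $h_{n-2}=0$ that you set aside for hand-checking never occurs, since $h_{-1}=1$.
\end{itemize}
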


\begin{remark}
(1) Denote by $M_{p,q}$ the sum of the squares of the entries in the multiplicity sequence associated to the minimal resolution of a cuspidal singularity with one Puiseux pair $(p,q)$. Then $\ell(p,q)$ and
$M_{p,q}$ are related by the following equation
$$
\ell(p,q)=pq-M_{p,q}. 
$$
(See Lemma 2.1(2).) Consequently, one can also express $m_{p,q}$ in terms of $M_{p,q}$, i.e., 
$$
\mbox{ $m_{p,q}=M_{p,q}+2\ell(p,q)+1$ or $m_{p,q}=M_{p,q}+2\ell(p,q)$. }
$$
These expressions provide close connections between the bound $m_{p,q}$ and the topological structure of the cuspidal singularity, i.e., its multiplicity sequence, which we hope are more useful for a
proof of Conjecture 1.15 in \cite{C}. On the other hand, the formula for the last entry $\ell(p,q)$, i.e.,
$$
\ell(p,q)=\lfloor \frac{q-1}{\min(u,q-u)} \rfloor, \mbox{ where $up=1\pmod{q}$, $1<u<q$,}
$$
which seems to be unknown before, gives a simpler formula for directly computing the bound 
$m_{p,q}$ than the definition $m_{p,q}:=pq +\max (\lfloor \frac{p}{p-p^\prime}\rfloor, \lfloor \frac{q}{q-q^\prime}\rfloor)$ (see also Lemma 2.4).

(2) Golla and Starkston introduced an upper bound for the self-intersection of a symplectic singular rational curve in a symplectic $4$-manifold in terms of its singularities (cf. \cite{GS}, Proposition 6.2). Specializing to the case of unicuspidal curves with one Puiseux pair $(p,q)$, the bound of Golla and Starkston is given by $M_{p,q}+2\ell(p,q)+1$. It follows easily that the bound $m_{p,q}$ is strictly 
smaller than the bound of Golla and Starkston, except for the cases where $(p,q)=(m-1,m)$ or 
$(p,q)=(m,km-1)$ for some $m\geq 3$ and $k\geq 2$, in which the two bounds coincide.  This was actually proved in \cite{C}, Lemma 6.11. We note that Theorem 1.1 says more: when $m_{p,q}$ is strictly smaller than the bound of Golla and Starkston, the two bounds differ by exactly $1$!

(3) If we let $(m_1,m_2,\cdots,m_k)$, where each $m_i>1$, be the multiplicity sequence of the 
cuspidal singularity with one Puiseux pair $(p,q)$, then the formula for the $\delta$-invariant gives the following identity
$$
(p-1)(q-1)=\sum_{i=1}^k m_i (m_i-1). 
$$
Note that $M_{p,q}=\sum_{i=1}^k m_i^2=pq-\ell(p,q)$, which implies the following formulas for 
$M_{p,q}$ and the sum of the multiplicities $m_i$, computable directly from $p$ and $q$: 
$$
M_{p,q}=pq-\lfloor \frac{q-1}{\min(u,q-u)} \rfloor,  \;\; \;\;
\sum_{i=1}^k m_i=p+q-1-\lfloor \frac{q-1}{\min(u,q-u)} \rfloor,
$$
where $up=1\pmod{q}$, $1<u<q$.
\end{remark}

For the second result in this note, we showed that for any $(p,q)$, there is a rational unicuspidal curve
with one Puiseux pair $(p,q)$ whose self-intersection realizes the upper bound $m_{p,q}$. This is a
strengthening of Theorem 1.17(3) in \cite{C}, where the existence of such an ``optimal" symplectic rational unicuspidal curve is established. 

\begin{theorem}
For any $(p,q)$, there is a rational surface which contains a rational unicuspidal curve $C$ with one Puiseux pair $(p,q)$ such that $C\cdot C=m_{p,q}$.
\end{theorem}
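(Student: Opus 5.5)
The abstract says how the curves are to be produced: from two infinite families of rational bicuspidal curves, one in $\C\P^2$ and one in $\C\P^1\times\C\P^1$. The plan is to start with a rational curve $D$ having two cusps, one of them with Puiseux pair $(p,q)$. We then blow up repeatedly at the other cusp, following its minimal resolution and possibly going further, until the strict transform $C$ is smooth there. At that point $C$ is a rational unicuspidal curve with Puiseux pair $(p,q)$ in a rational surface. Each blow-up at a point of multiplicity $m$ lowers the self-intersection by $m^2$. So if the second cusp has multiplicity sequence $(n_1,\dots,n_r)$ and we add $s$ extra blow-ups at smooth points of the curve, then
$$C\cdot C = D\cdot D-\sum_i n_i^2 - s .$$
We may take $s=0$, or choose $s\ge 0$ freely to lower the number. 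It is therefore enough to find a bicuspidal $D$ for which $D\cdot D-\sum n_i^2\ge m_{p,q}$. The conjectured bound is only needed to motivate equality; existence just requires $\ge$, since we can then blow up further.

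The first step is to pick concrete families. In $\C\P^2$ the natural family consists of rational bicuspidal curves of degree $d$ with cusps of types $(a,a+1)$ and $(p,q)$ of Fibonacci or Orevkov type. For example, $x^a y^{b}=z^{a+b}$ type curves, and their images under Cremona transformations, give bicuspidal curves whose second cusp is again of one-Puiseux-pair type. In $\C\P^1\times\C\P^1$ the candidates are the curves $\{x^p=y^q\}$ in affine coordinates, i.e.\ bidegree $(p,q)$ curves $\{x_0^p y_1^q = x_1^p y_0^q\}$. These have a $(p,q)$ cusp at one point and a $(q-p,q)$-type or similar cusp at infinity, and $D\cdot D=2pq$. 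For the second family the computation is
$$C\cdot C=2pq-M_{q-p,\,q}$$
(or $2pq-M$ of the cusp at infinity). By Lemma~2.1(2) and Theorem~1.1(2) this equals $pq+\ell(\cdot)$. So we must compare $\ell$ of the auxiliary cusp with $m_{p,q}-pq$, which by Theorem~1.1(1) is $\ell(p,q)$ or $\ell(p,q)+1$.

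The second step is this comparison, using the recursive identities and the formula $\ell(p,q)=\lfloor (q-1)/\min(u,q-u)\rfloor$. I expect the family in $\C\P^1\times\C\P^1$ to handle the generic case where $m_{p,q}=pq+\ell(p,q)$. The $\C\P^2$ family should handle the exceptional cases $q=p+1$ and $q\equiv -1 \pmod p$, where $m_{p,q}=pq+\ell(p,q)+1$. For instance, the cuspidal cubic-like curves $\{y^{p}z=x^{p+1}\}$ in $\C\P^2$ have a $(p,p+1)$ cusp and a second singular point of type $(1,p)$-ish. Blowing them up gives $C\cdot C=(p+1)^2-(\text{stuff})$, which should equal $p(p+1)+p=m_{p,p+1}$ after checking $\ell(p,p+1)=p-1$. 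To match, I would apply suitable de Jonquières or Cremona-type transformations so that the second cusp's multiplicity sequence sums to the correct value.

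The main obstacle is the generic case: showing that some member of an explicit family has exactly the required auxiliary cusp, so that $D\cdot D$ minus its $M$ is at least $pq+\ell(p,q)$ for every coprime $(p,q)$. I would first try to run the Euclidean algorithm on $(p,q)$ alongside the multiplicity sequences of the two cusps. The goal is to prove, by induction on the continued-fraction length, a recursive identity relating $\ell$ and $M$ for $(p,q)$ to those for the auxiliary pair, e.g.\ $(q-p,q)$ or $(p,q-p)$. The identity should reduce to the base cases $(m-1,m)$ and $(m,km-1)$ already covered by the $\C\P^2$ family.
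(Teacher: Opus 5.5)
There is a genuine gap in the case $p>2$, $q\equiv -1 \pmod p$, $q>p+1$ (that is, $q=kp-1$ with $k\ge 2$). There $m_{p,q}=pq+\ell(p,q)+1=pq+p$, and none of the curves you actually name reaches this value.

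\begin{itemize}
\item The curve $x^p=y^q$ in $\C\P^1\times\C\P^1$, parametrized by $([t^q:1],[t^p:1])$, has a $(p,q)$ cusp at \emph{both} $t=0$ and $t=\infty$, not a $(q-p,q)$ cusp. Resolving one of them gives $C\cdot C=2pq-M_{p,q}=pq+\ell(p,q)$.
\item The curves $x^ay^b=z^{a+b}$ are, after permuting coordinates, the curves $[z^p:z^q:1]$. Resolving the $(q-p,q)$ cusp gives $q^2-M_{q-p,q}=pq+\ell(p,q)$ as well.
\end{itemize}

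Both fall exactly one short in this case. Extra blow-ups only lower $C\cdot C$, and ``suitable de Jonqui\`eres or Cremona transformations'' is not a construction.

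The missing ingredient is a bicuspidal curve whose \emph{auxiliary} cusp has a larger last multiplicity than the $(p,q)$ cusp itself. The paper uses $([z^p:1],[z^q+z^{\alpha p}:1])\subset\C\P^1\times\C\P^1$ with $\alpha p>q$. Here $D\cdot D=2\alpha p^2=pq+pQ$, and the cusp at $z=\infty$ has Puiseux pair $(p,Q)$ with $Q=2\alpha p-q\equiv 1\pmod p$. Resolving that cusp gives
$C\cdot C=pq+\ell(p,Q)=pq+\ell(p,p+1)=pq+p$.
This equals $m_{p,q}$ because $B_{p,q}=R_{p,q}+1=R_{p-1,p}+1=p$ by Lemmas 2.1 and 2.3.

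What you call the main obstacle is not one. Once the cusp at infinity of $x^p=y^q$ is identified correctly, you get $C\cdot C=pq+\ell(p,q)=m_{p,q}$ whenever $\Delta_{p,q}=0$ (Lemma 2.3). No Euclidean-algorithm induction is needed. So your $\C\P^1\times\C\P^1$ family does settle the generic case; the paper gets the same number from $[z^p:z^q:1]\subset\C\P^2$. It is the exceptional case, which you assign to $\C\P^2$, that needs a genuinely new curve.

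Your treatment of $q=p+1$ also contains errors, though the curve itself works:
\begin{itemize}
\item The curve $y^pz=x^{p+1}$ is smooth at $[0:1:0]$, where locally $z=x^{p+1}$. So it is already unicuspidal and there is no second singular point to blow up.
\item $\ell(p,p+1)=p$, not $p-1$.
\item $m_{p,p+1}=(p+1)^2$, not $p(p+1)+p$.
\end{itemize}
With these corrected, the curve realizes the bound directly, with no blow-up.

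Your reduction from equality to ``$\ge$'' via blow-ups at smooth points is valid. However, it cannot rescue the one case where all of your curves fall short.
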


\begin{remark}
Theorem 1.3 has been verified for some special cases in \cite{C}. More concretely, for the case where $q=p+1$ or $4p-1$, the curve $C$ in Theorem 1.3 was shown to be realized by the corresponding rational unicuspidal curves in $\C\P^2$, a complete classification of which is given in \cite{F}. It was also verified for $p=2$; in particular, for the case of $p=2$ and $q>3$, the curve $C$ was constructed from a family of rational tricuspidal curves in $\C\P^2$ due to Flenner and Zaidenberg \cite{FZ}. See Examples 6.7 and  6.8 in \cite{C}. In addition, Theorem 1.3 was also verified for a few other low values of $p$, i.e., for $p=3,4,5$, by examining two specific families of rational bicuspidal curves, one in 
$\C\P^2$ and one in $\C\P^1\times \C\P^1$. For more details, see Example 6.9 of \cite{C}, where the case of $p=3$ was explicitly worked out. With the help of certain recursive relations behind the formula of the bound $m_{p,q}$ in Theorem 1.1, we are able to extend the calculations on the two infinite families of rational bicuspidal curves considered in Example 6.9 of \cite{C} for any values of $p$ and $q$, thus verifying Theorem 1.3 for the remaining cases. 

\end{remark}

For notational simplicity, we shall introduce the following ad hoc terminology: letting $C$ be a rational unicuspidal curve with one Puiseux pair $(p,q)$, we call the difference $m_{p,q}-C\cdot C$ the 
{\bf gap} for the curve $C$.

We now state the last result in this note, which is concerned with the following two families of rational bicuspidal curves considered in Example 6.9 of \cite{C}: 

\vspace{2mm}

{\bf (i):} The family of bicuspidal curves in $\C\P^2$: Let $1<p<q$ such that $gcd(p,q)=1$ and $q-p>1$.
We consider the rational curves in $\C\P^2$ defined by the following parametrizations:
$$
[z^p: z^q: 1], \mbox{ where $z\in \C\sqcup\{\infty\}$}. 
$$
Each curve has  two cuspidal singularities: one at $z=0$, with one Puiseux pair $(p,q)$, and the other
at $z=\infty$, with one Puiseux pair $(q-p,q)$. 

\vspace{2mm}

{\bf (ii):} The family of bicuspidal curves in $\C\P^1\times \C\P^1$: Let $1<p<q$ such that 
$gcd(p,q)=1$, and let $\alpha\in \Z_{>0}$ such that $q<\alpha p$. We consider the 
rational curves in $\C\P^1\times \C\P^1$ defined by the following parametrizations (cf. \cite{Moe}):
$$
([z^p:1], [z^q+z^{\alpha p} :1]), \mbox{ where $z\in \C\sqcup\{\infty\}$}.
$$
Each curve has two cuspidal singularities: one at $z=0$, with one Puiseux pair $(p,q)$, and the other
at $z=\infty$, with one Puiseux pair $(p,2\alpha p-q)$. 

\vspace{2mm}

With the preceding understood, we completely classified in the following theorem the gap patterns of the rational unicuspidal curves produced from the preceding two families of bicuspidal curves
by resolving one of the two singularities of the bicuspidal curves. 

\begin{theorem}
(1) For the family of bicuspidal curves in $\C\P^2$, the following are the only possibilities, where we note that case (i) and case (ii) are mutually exclusive. 
\begin{itemize}
\item [{(i)}] If $p>2$ and $p| (q+1)$, then the case resolving the singularity at $z=0$ has gap $0$ and the other case (i.e., resolving the singularity at $z=\infty$) has gap $1$. 
\item [{(ii)}] If $q-p>2$ and $(q-p) | (q+1)$, then the case resolving the singularity at $z=0$ has gap $1$ and the other case has gap $0$. 
\item [{(iii)}] In all other cases, the gap is $0$ in either case. 
\end{itemize}

(2) For the family of bicuspidal curves in $\C\P^1\times C\P^1$, the following are the only possibilities
for $p>2$, where we note that the cases (i), (ii) and (iii) are mutually exclusive. 
\begin{itemize}
\item [{(i)}] If $q-p=1$, then the case resolving the singularity at $z=0$ has gap $0$ and the other case (i.e., resolving the singularity at $z=\infty$) has gap $2$. 
\item [{(ii)}] If $q-p>1$ and $p | (q-1)$, then the case resolving the singularity at $z=0$ has gap $0$ and the other case has gap $1$. 
\item [{(iii)}] If $p| (q+1)$, then the case resolving the singularity at $z=0$ has gap $1$ and the other case has gap $0$. 
\item [{(iv)}] In all other cases, the gap is $0$ in either case. 
\end{itemize}
For $p=2$, if $q=3$, then the case resolving the singularity at $z=0$ has gap $0$ and the other case has gap $1$, and if $q>3$, the gap is $0$ in either case. 
\end{theorem}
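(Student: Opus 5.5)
The plan is to compute the self-intersection of the unicuspidal curve obtained by resolving one cusp, and compare it with $m_{p,q}$ using Theorem 1.1. Let $\bar C$ be one of the bicuspidal curves. Let $C$ be the proper transform after blowing up along the minimal (multiplicity-sequence) resolution of one cusp, say with Puiseux pair $(a,b)$. Each blow-up at a point of multiplicity $m_i$ lowers the self-intersection by $m_i^2$. Hence $C\cdot C=\bar C\cdot\bar C-M_{a,b}$, and by Lemma 2.1(2) this equals $\bar C\cdot \bar C-ab+\ell(a,b)$. Write $m_{p,q}=pq+\ell(p,q)+\epsilon(p,q)$ with $\epsilon\in\{0,1\}$ as in Theorem 1.1(1). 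Then the gap is a difference of $\ell$'s plus $\epsilon$ of the surviving cusp. Throughout, I use the convention that "resolving the singularity at $z=0$" leaves the cusp at $z=\infty$, and vice versa. I will check the outcome against the labelling in the statement.

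\emph{Family (i).} Here $\bar C$ has degree $q$, so $\bar C\cdot\bar C=q^2$. Resolving $z=\infty$ (pair $(q-p,q)$) leaves $C\cdot C=q^2-q(q-p)+\ell(q-p,q)=pq+\ell(q-p,q)$. The gap is then $\ell(p,q)-\ell(q-p,q)+\epsilon(p,q)$. By Theorem 1.1(2), if $up\equiv 1 \pmod q$ then $(q-u)(q-p)\equiv 1\pmod q$. Since $\min(u,q-u)$ is symmetric under $u\mapsto q-u$, we get $\ell(p,q)=\ell(q-p,q)$, so the gap is $\epsilon(p,q)$. As $q-p>1$, this is $1$ exactly when $p>2$ and $p\mid q+1$. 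The symmetric computation, resolving $z=0$, gives gap $\epsilon(q-p,q)$. This is $1$ exactly when $q-p>2$ and $(q-p)\mid(q+1)$; the alternative $q=(q-p)+1$ is impossible since $p>1$. For mutual exclusivity: if both held, then $p$ and $q-p$ would both divide $q+1$, hence both divide $p+1$. This forces $q-p\mid p+1$ and $p\mid q+1$ simultaneously, and elementary size considerations rule this out. I will check this directly.

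\emph{Family (ii).} In $\C\P^1\times\C\P^1$ the curve has bidegree $(\alpha p,p)$, so $\bar C\cdot\bar C=2\alpha p^2$. Put $q'=2\alpha p-q$, so that $q'\equiv -q\pmod p$.
\begin{itemize}
\item Keeping the cusp $(p,q)$ gives $C\cdot C=pq+\ell(p,q')$, and the gap is $\ell(p,q)-\ell(p,q')+\epsilon(p,q)$.
\item Keeping the cusp $(p,q')$ gives gap $\ell(p,q')-\ell(p,q)+\epsilon(p,q')$.
\end{itemize}
The key input is then a comparison lemma. Write $r=q\bmod p$. I claim $\ell(p,q)$ depends only on $p$ and $r$ (for $q>p$), with
\[
\ell(p,q)=\ell(p,q') \text{ when } r\neq \pm1, \qquad \ell=p \text{ when } r=1, \qquad \ell=p-1 \text{ when } r=p-1,\ p>2.
\]
The last two values come straight from the Euclidean algorithm that produces the multiplicity sequence. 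If $r=1$, the sequence is $p,\dots,p$ followed by $1$'s. If $r=p-1$, it is $p,\dots,p,p-1$ followed by $1$'s.

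Granting the lemma, the cases follow by direct substitution.
\begin{itemize}
\item If $q=p+1$: then $\epsilon(p,q)=1$, and $q'\equiv-1$ gives $\epsilon(p,q')=1$. The gaps are $p-(p-1)+1=2$ when keeping $(p,q)$ and $(p-1)-p+1=0$ when keeping $(p,q')$.
\item If $p\mid q-1$ with $q>p+1$: then $\epsilon(p,q)=0$, and the gaps are $1$ and $0$.
\item If $p\mid q+1$: the roles swap, giving gaps $0$ and $1$.
\item Otherwise: both $\ell$'s agree and both $\epsilon$'s vanish, so both gaps are $0$.
\end{itemize}
For $p=2$, every $q$ is odd and $r=1$. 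Now $\ell(2,b)=2$ and $\epsilon(2,b)=1$ only for $b=3$. Since $\alpha p>q$ forces $q'>q$, we get $q'>3$. The gaps are then $1$ when keeping $(2,q)$ with $q=3$, and $0$ otherwise. I will match these case-by-case against the statement's labels, fixing the resolution convention if needed.

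\emph{Main obstacle.} The step I expect to be hardest is the comparison lemma showing $\ell(p,q)=\ell(p,q')$ when $r\not\equiv\pm1\pmod p$. The formula in Theorem 1.1(2) is phrased via inverses modulo $q$, whereas here $p$ is fixed and $q$ changes modulo $p$. My first attempt is to invoke the recursive identities behind Theorem 1.1, in the style of Lemma 2.4. These should show that $\ell(p,q)$ equals the last multiplicity greater than $1$ in the Euclidean algorithm on $(p,r)$. I would then prove the symmetry $r\leftrightarrow p-r$ by observing that $p=1\cdot(p-r)+r$. This makes the Euclidean algorithm of $(p,p-r)$ pass through $(p-r,r)$. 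That pair is exactly where the algorithm of $(p,r)$ lands after its first step, as long as $r>p/2$; otherwise the roles of $r$ and $p-r$ are interchanged. The symmetry fails only in the degenerate cases $r=1$ or $r=p-1$, where one side reaches $1$ before producing a shared remainder.
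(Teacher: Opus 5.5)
Your proposal is correct and follows essentially the same route as the paper: you reduce each gap to $\ell(\cdot)-\ell(\cdot)+\epsilon(\cdot)$. In the paper's notation this is $B-R=R-R+\Delta$. You evaluate it with Theorem 1.1 (i.e.\ Lemmas 2.1 and 2.3) and the symmetry $\ell(r,p)=\ell(p-r,p)$, exactly as in Section 3. The comparison lemma you flag as the main obstacle is already the corollary $\ell(a,b)=\ell(b-a,b)$ of Lemma 2.1(2), and your Euclidean-algorithm argument simply reproves it.

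In the mutual-exclusivity check for part (1), note that $p\mid q+1$ gives $p\mid (q-p)+1$, not $p\mid p+1$; your size argument then goes through. The paper's shortcut is that coprimality forces $p(q-p)\mid q+1$, which is impossible when $p,q-p>2$.
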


We remark that the initial examination of these bicuspidal curves in Example 6.9 of \cite{C} suggested that the gaps were either $0$ or $1$. More extensive calculations performed by the second-named author using a Python program revealed a more refined pattern in the cases tested: at least one of the two gaps was always $0$, while the other was $0$ or $1$ except in the gap-$2$ case described in Theorem 1.5(2)(i). These observations suggested the classification in Theorem 1.5 and helped guide its subsequent proof.

Note that since in all the possible cases in Theorem 1.5, the gap is always non-negative, it follows easily that the two families of bicuspidal curves do not produce any counterexamples of 
Conjecture 1.15 in \cite{C}. On the other hand, Theorem 1.3 follows as a corollary of Theorem 1.5. 

\vspace{2mm}

{\bf Proof of Theorem 1.3:} 

\vspace{2mm}

Since the case where $q=p+1$ or $p=2$ has already been verified in \cite{C}, for simplicity we may assume $p>2$ and $q-p>1$ without loss of generality. With this understood, we consider separately the following two different scenarios. 

First, assume that $q+1$ is not divisible by $p$. In this case, we let $C$ be the rational unicuspidal curve produced from the $\C\P^2$-family by resolving the singularity at $z=\infty$. Then $C$ has 
Puiseux pair $(p,q)$. Note that we are either in case (ii) or case (iii) of Theorem 1.5(1). Since the gap in both case (ii) and case (iii) is $0$, we have $C\cdot C=m_{p,q}$ for the curve $C$. 

Next, assume that $q+1$ is divisible by $p$. In this case, if we let $C$ be the rational unicuspidal curve produced from the $\C\P^1\times \C\P^1$-family by resolving the singularity at $z=\infty$, then $C$ has Puiseux pair $(p,q)$. Note that since $p>2$, we are in case (iii) of Theorem 1.5(2). As the gap is $0$, we have $C\cdot C=m_{p,q}$ for the curve $C$. 

The proof of Theorem 1.3 is completed. 

\vspace{2mm}

Our work naturally raised the following question.

\begin{question}
Fixing any pair $(p,q)$, what is the complete list of $(X,C)$, where $C$ is a rational unicuspidal curve 
with one Puiseux pair $(p,q)$ and $X$ is an algebraic surface containing $C$, such that $C$ attains the maximum in self-intersections among all such curves with one Puiseux pair $(p,q)$? Here we assume 
$(X,C)$ is minimal in the following sense: there are no exceptional $(-1)$-curves in the complement 
$X\setminus C$. Note that since $C$ attains the maximum in self-intersections, there are also no exceptional $(-1)$-curves which intersect $C$ transversely (hence away from the singularity) at one point. 
\end{question}

\begin{remark}
(1) By Hartshorne \cite{Hartsh}, a singular rational curve cannot have arbitrarily large self-intersection. Hence the pair $(X,C)$ where $C$ has the maximal self-intersection is well-defined. With this understood, note that if Conjecture 1.15 of \cite{C} is true, then Theorem 1.3 implies that the maximal self-intersection for the rational unicuspidal curves with one Puiseux pair $(p,q)$ is given by $m_{p,q}$. 
From the work of Hartshorne \cite{Hartsh}, it is a natural question to classify the singular rational curves
which have the maximal self-intersection fixing a topological type of the singularities, and Question 1.6 is the special case concerning curves with the simplest possible singularity types, in which case there is also a conjectured answer for the maximal self-intersection. 

(2) Alternatively, one may classify the singular rational curves $S$ in $\C\P^2$ or in a Hirzebruch surface, where $S$ is such that if we resolve all but one of the singular points of $S$ (cuspidal or non-cuspidal), we obtain a pair $(X,C)$ in Question 1.6. We note that the proof (or verification) of Theorem 1.3, either in this note or in \cite{C}, involved some natural examples of such singular curves $S$, namely,  the two families of bicuspidal curves which are fully studied in this note, or the family of tricuspidal curves of Flenner and Zaidenberg \cite{FZ} and the unicuspidal curves in $\C\P^2$ from \cite{F}, which were used in \cite{C}. It would be interesting to find more examples of such families of rational curves, as it is likely that the set of the singular curves $S$ is given by a small number of infinite families of rational cuspidal curves, which may be completely classified. 

(3) Question 1.6 is an ``algebraic geometry" manifestation of a corresponding question in symplectic geometry, i.e., classifying the symplectic fillings of the relevant $\s^1$-invariant contact structure (or equivalently, the corresponding Li-Mak contact structure \cite{LM}). See \cite{C} for more details. 
\end{remark}

\begin{acknowledgments}
This research was carried out as part of the Summer 2026 REU program in the Department of Mathematics and Statistics at the University of Massachusetts Amherst. The second and third named authors thank the program for funding their participation and for providing the opportunity to work on this project.
\end{acknowledgments}

\section{Recursive relations and formulas concerning the bound $m_{p,q}$}

Let $1<p<q$ where $gcd(p,q)=1$. We define $\tilde{p}=p$ and $\tilde{q}=q-p$, and assume 
$\tilde{q}>1$. Note that if $C$ is a germ of cuspidal singularity with one Puiseux pair $(p,q)$, and let
$\tilde{C}$ denote the proper transform of $C$ after blowing up at the singularity, then 
$\tilde{C}$ is a germ of cuspidal singularity with one Puiseux pair $(\tilde{p},\tilde{q})$ or
$(\tilde{q},\tilde{p})$. 

With the preceding understood, we introduce the following notations
$$
R_{p,q}:=pq-M_{p,q} \mbox{ and } B_{p,q}:=m_{p,q}-pq=\max (\lfloor \frac{p}{p-p^\prime}\rfloor, \lfloor \frac{q}{q-q^\prime}\rfloor), 
$$
where $p^\prime,q^\prime$ are uniquely determined by the conditions $1\leq p^\prime<p$,
$1\leq q^\prime<q$, and $pq^\prime+qp^\prime=pq+1$. For notational simplicity, we shall not require 
the pair $(a,b)$ in any of $M_{a,b}$, $R_{a,b}$ or $B_{a,b}$ obey the condition $a<b$. With this 
understood, for convenience we state in the following lemma some simple observations concerning $R_{p,q}$, which may be well-known among the experts. 

\begin{lemma}
Assume $1<p<q$ where $gcd(p,q)=1$ and set $\tilde{p}=p$ and $\tilde{q}=q-p$.
Then $R_{p,q}=R_{\tilde{p},\tilde{q}}$ if $\tilde{q}>1$. As a consequence, one has 
\begin{itemize}
\item [{(1)}] $R_{p,q}=R_{q-p,q}$ when $q-p>1$. 
\item [{(2)}] Let $\ell(p,q)>1$ be the last entry of the multiplicity sequence of a cuspidal singularity with one Puiseux pair $(p,q)$. Then $\ell(p,q)=R_{p,q}=pq-M_{p,q}$. As a corollary, note that 
$\ell(p,q)=\ell(q-p,q)$ when $q-p>1$. 
\end{itemize}
\end{lemma}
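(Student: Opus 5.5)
The plan is to make the multiplicity sequence explicit and track what one blow-up does to it. Recall that the multiplicity sequence of a cusp with one Puiseux pair $(p,q)$, $1<p<q$, is generated by the Euclidean algorithm. The first blow-up at the singular point has multiplicity $p$, and the proper transform $\tilde C$ is a cusp with Puiseux pair $(p,q-p)$, read in whichever order puts the smaller entry first. The multiplicity sequence of $C$ is therefore $p$ followed by the multiplicity sequence of $\tilde C$, where entries equal to $1$ are dropped; this is harmless because they contribute nothing to the notion of ``last entry $>1$''. Hence
$$
M_{p,q}=p^2+M_{\tilde p,\tilde q}
$$
whenever $\tilde q=q-p>1$. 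In this case $\tilde C$ is still singular, because $\gcd(p,q-p)=1$ and both entries exceed $1$. Since $pq-p^2=p(q-p)=\tilde p\tilde q$, it follows that
$$
R_{p,q}=pq-M_{p,q}=p(q-p)-M_{\tilde p,\tilde q}=R_{\tilde p,\tilde q}.
$$
Here I use that $M_{a,b}$ is symmetric in $(a,b)$, as the convention stated before the lemma allows. This gives the main recursion.

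For (1), apply the recursion to the pair $(q-p,q)$, assuming $q-p>1$. If $q-p<p$, the pair $(q-p,q)$ reduces to $(q-p,\,q-(q-p))=(q-p,p)$. The pair $(p,q)$ reduces to $(p,q-p)$. These two are the same unordered pair, so $R_{p,q}=R_{q-p,q}$. If instead $q-p>p$, the roles of $p$ and $q-p$ are simply interchanged, and the argument is identical. The reduction requires that the second entry $p$ be $>1$, which holds.

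For (2), I will iterate the recursion, which is just the subtractive Euclidean algorithm on the unordered pair, while both entries stay larger than $1$. Because $\gcd=1$, the process terminates at a pair $(1,n)$ with $n>1$. Its last step comes from the pair $(n,n+1)$, since we subtract the smaller entry from the larger. At $(n,n+1)$ the multiplicity sequence is $n$ followed by the sequence of $(1,n)$, which is smooth, so $M_{n,n+1}=n^2$. This gives
$$
R_{n,n+1}=n(n+1)-n^2=n,
$$
and $n$ is exactly the last entry $>1$ of the multiplicity sequence of the original singularity. Therefore $\ell(p,q)=R_{p,q}$, and $\ell(p,q)=\ell(q-p,q)$ then follows from (1).

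The only delicate point is the bookkeeping of the multiplicity sequence under blow-up. Specifically, one must justify that the proper transform of a $(p,q)$ cusp with $q>p$ has Puiseux pair $\{p,q-p\}$ and that no entries are lost. I would verify this by substituting $y=xt$ into the parametrization $(z^p,z^q)$, which gives $(z^p,z^{q-p})$. I would also handle the degenerate case $q-p=1$ separately, as the base case above.
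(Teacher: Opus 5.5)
Your proof is correct and follows essentially the same route as the paper: the blow-up recursion $M_{p,q}=p^2+M_{p,q-p}$ gives $R_{p,q}=R_{p,q-p}$, comparing unordered pairs gives (1), and iterating the subtractive Euclidean algorithm down to $(\ell,\ell+1)$ with $R_{\ell,\ell+1}=\ell$ gives (2). The only additions on your side are the explicit substitution $y=xt$ justifying the blow-up bookkeeping, which the paper takes as known, and a case split $q-p<p$ versus $q-p>p$ in (1) that you do not need, since $(q-p,q)$ always reduces to $(q-p,p)$.
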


\begin{proof}
Assume $\tilde{q}>1$. Then we note that $\tilde{p}\tilde{q}=pq-p^2$, $M_{\tilde{p},\tilde{q}}=M_{p,q}-p^2$, from which it follows immediately that $R_{p,q}=R_{\tilde{p},\tilde{q}}$. 

To see $R_{p,q}=R_{q-p,q}$ when $q-p>1$, we note that $R_{p,q}=R_{p,q-p}$ and 
$R_{q-p,q}=R_{q-p, q-(q-p)}=R_{q-p,p}$, which implies that $R_{p,q}=R_{q-p,q}$.

Finally, for $\ell(p,q)=R_{p,q}=pq-M_{p,q}$, note that since $p,q$ are co-prime, under finitely many recursive reductions of the form $(a,b)$ to $(\tilde{a},\tilde{b})$, where $a<b$, $\tilde{a}=a$, 
and $\tilde{b}=b-a$, one reaches to a pair of the form $(\ell,\ell+1)$,
where $\ell=\ell(p,q)>1$. It follows easily that $R_{p,q}=R_{\ell,\ell+1}=\ell$, which implies 
$\ell(p,q)=R_{p,q}=pq-M_{p,q}$. The identity $\ell(p,q)=\ell(q-p,q)$ is simply $R_{p,q}=R_{q-p,q}$.

\end{proof}

On the other hand, the corresponding recursive relation for $B_{p,q}$ is slightly more complicated. 
First, the following lemma is a strengthening of Lemma 6.10 in \cite{C}.

\begin{lemma}
Assume $1<p<q$ where $gcd(p,q)=1$ and set $\tilde{p}=p$ and $\tilde{q}=q-p$, with $\tilde{q}>1$.
Then one has $0\leq B_{\tilde{p},\tilde{q}}-B_{p,q}\leq 1$. Moreover, $B_{p,q}=B_{\tilde{p},\tilde{q}}-1$
if and only if either $\tilde{q}=\tilde{p}+1$ or $\tilde{p}>\tilde{q}>2$ and $\tilde{p}=-1 \pmod{\tilde{q}}$. 
\end{lemma}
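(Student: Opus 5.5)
The plan is to compute the auxiliary integers $(\tilde{p}^\prime,\tilde{q}^\prime)$ for the pair $(\tilde{p},\tilde{q})=(p,q-p)$ explicitly in terms of $(p^\prime,q^\prime)$, and then compare the two floor terms directly.

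\textbf{Step 1: the auxiliary integers.} Reducing $pq^\prime+qp^\prime=pq+1$ modulo $p$ and modulo $q$ gives $p^\prime\equiv q^{-1}\pmod p$ and $q^\prime\equiv p^{-1}\pmod q$. Since $q-p\equiv q\pmod p$, the defining equation for $(\tilde p,\tilde q)$ forces $\tilde{p}^\prime=p^\prime$. Substituting into $p\tilde q^\prime+(q-p)\tilde p^\prime=p(q-p)+1$ then gives
$$\tilde{q}^\prime=q^\prime-p+p^\prime .$$
Uniqueness of the solution guarantees that this value lies in $[1,\tilde q-1]$, and this should be checked directly as a sanity test.

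Now set $e=p-p^\prime$, $d=q-q^\prime$, $\tilde d=\tilde q-\tilde q^\prime=d-p^\prime$. The defining equation becomes
$$pd+qe=pq-1, \qquad \tilde p\tilde d+\tilde q e=\tilde p\tilde q-1 .$$
Hence $B_{p,q}=\max(\lfloor p/e\rfloor,\lfloor q/d\rfloor)$ and $B_{\tilde p,\tilde q}=\max(\lfloor p/e\rfloor,\lfloor \tilde q/\tilde d\rfloor)$. The first term is common to both, so everything reduces to comparing $x=q/d$ with $\tilde x=\tilde q/\tilde d=(q-p)/(d-p^\prime)$.

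\textbf{Step 2: monotonicity.} I would show $\tilde x\ge x$, i.e. $(q-p)d\ge q(d-p^\prime)$, which is $qp^\prime\ge pd$. Using $pd=pq-1-qe$, this becomes $qp^\prime+qe\ge pq-1$, i.e. $qp\ge pq-1$, which is true. This gives $B_{\tilde p,\tilde q}\ge B_{p,q}$.

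\textbf{Step 3: the upper bound.} Put $y=p/e$. The identities from Step 1 read
$$\frac1x+\frac1y=1-\frac1{pq}, \qquad \frac1{\tilde x}+\frac1y=1-\frac1{p\tilde q}.$$
The second term $y$ is unchanged. If $\lfloor y\rfloor$ already realizes the maximum for $(\tilde p,\tilde q)$, the difference is $0$. Otherwise $\tilde x$ is the larger of $\tilde x, y$, so $y$ is the smaller. From these relations I would estimate $\tilde x-x$ explicitly: it equals $\bigl(qp^\prime-pd\bigr)/\bigl(d\tilde d\bigr)=(qp^\prime+qe-pq+1)/(d\tilde d)$. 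This should give an inequality of the form $\lfloor\tilde x\rfloor\le\max(\lfloor x\rfloor,\lfloor y\rfloor)+1$. The needed input is that $\tilde d\ge1$ and that $\tilde x$ large forces $\tilde d$ small. In particular, when $\tilde d=1$ we get $\tilde x=\tilde q$ and $p^\prime=d-1$, and these can be handled by hand.

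\textbf{Step 4: the equality case (main obstacle).} I expect the hardest part to be characterizing exactly when the maximum jumps by $1$. The route is to analyze when $\lfloor\tilde x\rfloor>\max(\lfloor x\rfloor,\lfloor y\rfloor)$. I would split according to the size of $\tilde d$. The case $\tilde d=1$ should correspond to $\tilde q^\prime=\tilde q-1$, i.e. $p\equiv-1\pmod{\tilde q}$. I would then expect the jump to occur exactly when either $\tilde q=\tilde p+1$, or $\tilde q>2$ with $\tilde q<\tilde p$. Conversely, for $\tilde d\ge2$ I would show $\tilde x-x<1$ together with no crossing of an integer exceeding $\lfloor y\rfloor$. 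To do this I would use $1/\tilde x+1/y<1$, which forces $\tilde x\le$ roughly $y/(y-1)$ whenever $y$ is large. Small cases (for instance $\tilde q=2$, and the configurations with $\tilde p<\tilde q$ other than $\tilde q=\tilde p+1$) need to be checked separately against the stated exceptions.
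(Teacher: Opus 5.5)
Your reduction is the same as the paper's: $\tilde p^\prime=p^\prime$, $\tilde q^\prime=q^\prime-p+p^\prime$, and a comparison of $x=q/d$ with $\tilde x=\tilde q/\tilde d$, with the $\lfloor y\rfloor$ term common to both maxima. However, Steps 3 and 4 stop short of a proof. The missing step is an identity you already have in hand. Since $e=p-p^\prime$, your numerator $qp^\prime+qe-pq+1=q(p^\prime+e)-pq+1$ equals $1$, so
\[
\tilde x-x=\frac{1}{d\,\tilde d}.
\]
With $d,\tilde d\ge1$ this gives $0<\tilde x-x\le1$, hence $\lfloor x\rfloor\le\lfloor\tilde x\rfloor\le\lfloor x\rfloor+1$. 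The bound $0\le B_{\tilde p,\tilde q}-B_{p,q}\le1$ then follows immediately. The estimate ``$\tilde x$ large forces $\tilde d$ small'' is not needed, and as stated it is not a proof.

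The more serious problem is Step 4. A jump needs $\lfloor\tilde x\rfloor>\lfloor x\rfloor$, i.e.\ an integer $n$ with $x<n\le\tilde x$. The missing idea is a denominator argument: since $x=q/d$, we have $n-x=(nd-q)/d\ge1/d$, whereas $n-x\le\tilde x-x=1/(d\tilde d)$, so $\tilde d=1$. The tool you propose for $\tilde d\ge2$, namely $1/\tilde x+1/y<1$, only decides whether $\tilde x$ or $y$ dominates; it cannot detect an integer in $(x,\tilde x]$. For instance, for $(p,q)=(3,10)$ and $(\tilde p,\tilde q)=(3,7)$ one has $\tilde d=2$, $\tilde x=7/2$, $y=3/2$, so $\lfloor\tilde x\rfloor>\lfloor y\rfloor$. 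The absence of a jump there is due solely to $x=10/3$.

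Once $\tilde d=1$ is known, $\tilde x=\tilde q$, $\lfloor x\rfloor=\tilde q-1$ and $\tilde q\tilde p^\prime=\tilde p+1$, so a jump occurs iff $y<\tilde q$. If $\tilde q>\tilde p$, this forces $\tilde p^\prime=1$ and $\tilde q=\tilde p+1$, and then $y\le2<\tilde q$. If $\tilde q<\tilde p$, then $y=\frac{\tilde q\tilde p^\prime-1}{(\tilde q-1)\tilde p^\prime-1}<\tilde q$ iff $\tilde q>2$. Conversely, each of the two stated conditions gives $\tilde q^\prime=\tilde q-1$ by uniqueness, so the same computation yields the jump.
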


\begin{proof}
Let $\tilde{p}^\prime,\tilde{q}^\prime$ be the integers uniquely determined by the conditions 
$1\leq \tilde{p}^\prime<\tilde{p}$, $1\leq \tilde{q}^\prime<\tilde{q}$, and 
$\tilde{p}\tilde{q}^\prime+\tilde{q}\tilde{p}^\prime=\tilde{p}\tilde{q}+1$. Then as one observed in \cite{C},
Lemma 6.10, it is easy to check that 
$$
\tilde{p}^\prime =p^\prime, \;\; \tilde{q}^\prime=q^\prime-p+p^\prime,
$$
which imply easily that
$$
\frac{\tilde{p}}{\tilde{p}-\tilde{p}^\prime}=\frac{p}{p-p^\prime}, \;\; 
\frac{\tilde{q}}{\tilde{q}-\tilde{q}^\prime}-\frac{q}{q-q^\prime}=\frac{1}{(\tilde{q}-\tilde{q}^\prime)(q-q^\prime)}. 
$$
With $B_{p,q}=\max (\lfloor \frac{p}{p-p^\prime}\rfloor, \lfloor \frac{q}{q-q^\prime}\rfloor)$ and
$B_{\tilde{p},\tilde{q}}=\max (\lfloor \frac{\tilde{p}}{\tilde{p}-\tilde{p}^\prime}\rfloor, \lfloor 
\frac{\tilde{q}}{\tilde{q}-\tilde{q}^\prime}\rfloor)$, it follows immediately that
$0\leq B_{\tilde{p},\tilde{q}}-B_{p,q}\leq 1$.

Next, assume $B_{p,q}=B_{\tilde{p},\tilde{q}}-1$. Since $B_{\tilde{p},\tilde{q}}>B_{p,q}$, we must have 
$$
\frac{\tilde{q}}{\tilde{q}-\tilde{q}^\prime}> 
\frac{\tilde{p}}{\tilde{p}-\tilde{p}^\prime}=\frac{{p}}{{p}-{p}^\prime} \mbox{ and }
B_{\tilde{p},\tilde{q}}=\lfloor \frac{\tilde{q}}{\tilde{q}-\tilde{q}^\prime}\rfloor>\frac{q}{q-q^\prime}.
$$
It follows easily that
$$
\frac{1}{(\tilde{q}-\tilde{q}^\prime)(q-q^\prime)}\geq 
\lfloor \frac{\tilde{q}}{\tilde{q}-\tilde{q}^\prime}\rfloor-\frac{q}{q-q^\prime}\geq \frac{1}{q-q^\prime}, 
$$
which implies that $\tilde{q}-\tilde{q}^\prime=1$ must be true. Substitute $\tilde{q}^\prime=\tilde{q}-1$ 
in $\tilde{p}\tilde{q}^\prime+\tilde{q}\tilde{p}^\prime=\tilde{p}\tilde{q}+1$, we arrive at
$\tilde{q}\tilde{p}^\prime=\tilde{p}+1$. 

If $\tilde{q}>\tilde{p}$, then one must have $\tilde{q}=\tilde{p}+1$. On the other hand, if $\tilde{q}<\tilde{p}$, then we observe that  $\tilde{p}=-1 \pmod{\tilde{q}}$. It remains to show that 
$\tilde{q}>2$. To see this, note that 
$$
\frac{\tilde{q}}{\tilde{q}-1}<\frac{\tilde{q}\tilde{p}^\prime-1}{(\tilde{q}-1)\tilde{p}^\prime-1}=
\frac{\tilde{p}}{\tilde{p}-\tilde{p}^\prime}<\frac{\tilde{q}}{\tilde{q}-\tilde{q}^\prime}=\tilde{q},
$$
from which $\tilde{q}>2$ follows easily. 

Finally, one can check easily that if $\tilde{q}=\tilde{p}+1$ or $\tilde{p}>\tilde{q}>2$ and $\tilde{p}=-1 \pmod{\tilde{q}}$, then $\tilde{q}^\prime=\tilde{q}-1$ must be true, from which it follows easily that 
$B_{p,q}=B_{\tilde{p},\tilde{q}}-1$ (simply by reversing the derivations above). This finishes the proof.

\end{proof}

Next, we shall upgrade the recursive relation into a form which is more convenient to use, because recursively, $R_{p,q}$ is better behaved (cf. Lemma 2.1). Compare also with Lemma 6.11 in \cite{C}.

\begin{lemma}
Assume $1<p<q$ where $gcd(p,q)=1$, and set $\Delta_{p,q}:=B_{p,q}-R_{p,q}$. 
Then $\Delta_{p,q}\in \{0,1\}$. Moreover, $\Delta_{p,q}=1$
if and only if either $q=p+1$ or $p>2$ and $q=-1\pmod{p}$. 
\end{lemma}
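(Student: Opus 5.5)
We will argue by induction along the Euclidean-type reduction $(p,q)\mapsto(\tilde p,\tilde q)=(p,q-p)$ used in Lemma 2.1, writing pairs in either order as the paper allows. The two recursions we need are $R_{p,q}=R_{\tilde p,\tilde q}$ when $\tilde q>1$ (Lemma 2.1), and, by Lemma 2.2, $B_{p,q}=B_{\tilde p,\tilde q}$ except in the exceptional cases ($\tilde q=\tilde p+1$, or $\tilde p>\tilde q>2$ with $\tilde p\equiv -1 \pmod{\tilde q}$), where $B_{p,q}=B_{\tilde p,\tilde q}-1$. Together these give $\Delta_{p,q}=\Delta_{\tilde p,\tilde q}$ outside the exceptional cases and $\Delta_{p,q}=\Delta_{\tilde p,\tilde q}-1$ inside them. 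We first check that $B$, $R$ and the conditions in Lemma 2.2 are symmetric in the two entries, so the reduction may be applied to the unordered pair $\{\min,\max\}$. Symmetry of $B$ is clear from the symmetric defining relation $pq'+qp'=pq+1$.

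\textbf{Base cases.} The reduction terminates at $(\ell,\ell+1)$ with $\ell=\ell(p,q)\ge 2$, or at a pair containing $1$. Reducing $(a,b)$ with $b=ka+1$ leads to $(a,a+1)$ first, so we only stop at $(\ell,\ell+1)$. For $(\ell,\ell+1)$ we solve $\ell q'+(\ell+1)p'=\ell(\ell+1)+1$: this gives $p'=1$ and $q'=\ell$, so $q/(q-q')=\ell+1$ and $p/(p-p')=\ell/(\ell-1)$. Hence $B=\ell+1$ and $R=\ell$, so $\Delta=1$, consistent with the claim since $q=p+1$.

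\textbf{Inductive step.} Take $(p,q)$ with $q-p>1$ and suppose the statement holds for $(\tilde p,\tilde q)$, with the pair put in increasing order. There are two cases.

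\emph{Case $\tilde q<\tilde p$, i.e.\ $q<2p$.} The reduced pair in increasing order is $(q-p,p)$. The exceptional condition reads: $q-p>2$ and $p\equiv -1\pmod{q-p}$. This is exactly the induction criterion for $\Delta_{q-p,p}=1$ (with roles $P=q-p$, $Q=p$, the condition $Q\equiv-1 \pmod P$ with $P>2$). The only other way to get $\Delta_{q-p,p}=1$ is $p=(q-p)+1$, and that is exactly the other exceptional condition $\tilde p=\tilde q+1$ rewritten. So whenever $\Delta_{\tilde p,\tilde q}=1$ we are in the exceptional case and $\Delta_{p,q}=0$; otherwise $\Delta_{p,q}=\Delta_{\tilde p,\tilde q}=0$. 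The claim then requires that $q\ne p+1$ and $q\not\equiv-1\pmod p$ in this range. Both hold: $q-p>1$ rules out the first, and $p<q<2p$ with $q=2p-1$ would give $q-p=p-1$, which lands in the $\tilde p=\tilde q+1$ exceptional branch, also yielding $0$. This last point needs care. The claim says $\Delta_{p,2p-1}=1$ for $p>2$, so we must recheck: here $\tilde p=p$ and $\tilde q=p-1$, so $\tilde p=\tilde q+1$, which is \emph{not} among the exceptional conditions ($\tilde q=\tilde p+1$ or $\tilde p\equiv-1 \pmod{\tilde q}$ with $\tilde q>2$). Indeed $p\equiv 1\pmod{p-1}$. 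So $\Delta$ is preserved and equals $\Delta_{p-1,p}=1$, as required. The case analysis must therefore track exactly which of $\Delta_{\tilde p,\tilde q}=1$ and ``exceptional'' coincide.

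\emph{Case $\tilde q>\tilde p$, i.e.\ $q>2p$.} The reduced pair is $(p,q-p)$, and the exceptional conditions reduce to $q-p=p+1$, i.e.\ $q=2p+1$. The congruence $q\equiv-1\pmod p$ is equivalent to $q-p\equiv -1\pmod p$, so $\Delta_{p,q}$ and $\Delta_{p,q-p}$ satisfy the same criterion, and the recursion preserves $\Delta$. The boundary case $q=2p+1$ gives $\Delta_{p,q}=\Delta_{p,p+1}-1=0$, and indeed $2p+1\equiv 1\not\equiv -1\pmod p$ for $p>2$. For $p=2$ the statement excludes $q\equiv-1$, consistent with $\Delta_{2,5}=0$.

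\textbf{Main obstacle.} The delicate part is this bookkeeping: matching precisely, in each case, when $\Delta_{\tilde p,\tilde q}=1$ against when Lemma 2.2's exceptional condition holds. This must include the small-$p$ boundaries ($p=2$, and $\tilde q=2$, which Lemma 2.2 excludes) and the cases where the reduction swaps the order of the pair. If this becomes messy, a fallback is a direct computation: $p'\equiv q^{-1}\pmod p$, and $\lfloor p/(p-p')\rfloor$ can be compared with $\ell(p,q)$ through the continued fraction of $q/p$.
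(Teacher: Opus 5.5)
Your argument is correct and is essentially the paper's proof: induction along the Euclidean reduction using Lemmas 2.1 and 2.2, starting from the base case $(\ell,\ell+1)$. The only difference is that you take one subtraction at a time, so your $q>2p$ case spells out the intermediate steps (all non-exceptional except $(p,2p+1)\to(p,p+1)$) that the paper's jump $(p,kp+r)\to(r,p)$ leaves implicit, while your split of the $q<2p$ case into $q=2p-1$ versus $p+1<q<2p-1$ matches the paper's cases (ii) and (iii). In a clean write-up, drop the false opening claim that the conditions of Lemma 2.2 are symmetric in $\tilde p,\tilde q$, together with the first pass of the $q<2p$ case that relied on it; the asymmetry ($\tilde q=\tilde p+1$ is exceptional, $\tilde p=\tilde q+1$ is not) is exactly what gives $\Delta_{p,2p-1}=1$, so state the two subcases separately from the start rather than patching afterwards.
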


\begin{proof}
Case (1): suppose $q=p+1$. By a direct calculation, one has $B_{p,q}=B_{p,p+1}=p+1$ and 
$R_{p,q}=R_{p,p+1}=p$, which implies that $\Delta_{p,q}=1$.

\vspace{2mm}

Case (2): suppose $q\neq p+1$, and we write $q=kp+r$ where $1\leq r<p$. 

\vspace{2mm}

(i): assume $r=1$, where $k>1$ is necessarily true. In this case, by Lemma 2.2, then Lemma  2.1, one has $B_{p,q}=B_{p, p+1}-1=R_{p,p+1}=R_{p,q}$, which implies that $\Delta_{p,q}=0$ in this case.

\vspace{2mm} 

(ii): assume $1<r=p-1$, where $p>2$ is necessarily true. Note that in this case, $p>2$ and $q=-1\pmod{p}$ is true, so we need to prove $\Delta_{p,q}=1$. To see this, we observe that with $r=p-1$, the condition $r>2$ and $p=-1\pmod{r}$ cannot be satisfied. Hence by Lemma 2.2, then
Lemma 2.1, one has 
$$
B_{p,q}=B_{r,p}=B_{p-1, p}=R_{p-1,p}+1=R_{r,p}+1=R_{p,q}+1,
$$ 
which implies $\Delta_{p,q}=1$. (Here we used the fact that $\Delta_{p-1,p}=1$ from Case (1).)

\vspace{2mm} 

(iii): assume $1<r<p-1$. We need to show $\Delta_{p,q}=0$. 
By induction, we assume the lemma is true for $(r,p)$. With this understood, 
we observe that by Lemma 2.2,
\[
  B_{p,q}=
  \begin{cases}
    B_{r,p}-1, & \text{if } r>2 \text{ and } p= -1\pmod r,\\
    B_{r,p}, & \text{if otherwise, i.e., either } r=2 \text{ or } p\neq -1\pmod r.
  \end{cases}
\]
In the former case, since the lemma is true for $(r,p)$, we have $B_{r,p}=R_{r,p}+1=R_{p,q}+1$,
as $\Delta_{r,p}=1$ by the induction assumption and $R_{r,p}=R_{p,q}$ by Lemma 2.1. Consequently, 
$$
\Delta_{p,q}=B_{p,q}-R_{p,q}=B_{r,p}-1-R_{p,q}=0.
$$
Likewise, in the latter case, we have $B_{r,p}=R_{r,p}=R_{p,q}$ instead, as in this case, since
$p\neq r+1$ is also true, one has $\Delta_{r,p}=0$ by the induction assumption. It follows that 
$$
\Delta_{p,q}=B_{p,q}-R_{p,q}=B_{r,p}-R_{p,q}=0
$$
as well. This finishes the proof. 
\end{proof}

Finally, we proceed further by deriving the following expression for $B_{p,q}$.

\begin{lemma}
Let $1<a<b$ and $gcd(a,b)=1$. Furthermore, we assume $b=ka+s$ where $1<s<a$. Then
$B_{a,b}=\lfloor \frac{a}{\min (u,a-u)}\rfloor$, where $us=1 \pmod{a}$, $1<u<a$. 
\end{lemma}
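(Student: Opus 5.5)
The plan is to compute $a^\prime$ and $b^\prime$ directly in terms of $u$, and then compare the two floors in the definition $B_{a,b}=\max(\lfloor \frac{a}{a-a^\prime}\rfloor, \lfloor \frac{b}{b-b^\prime}\rfloor)$ with $\lfloor a/u\rfloor$ and $\lfloor a/(a-u)\rfloor$. No recursion (Lemmas 2.2, 2.3) should be needed.

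\emph{Step 1: identify $a^\prime$.} First, $u$ is well defined. Since $\gcd(a,b)=1$ and $b=ka+s$, we have $\gcd(s,a)=1$, so $s$ has an inverse modulo $a$. Moreover $u\neq 1$, because $1<s<a$ means $s\not\equiv 1 \pmod a$. Now reduce the defining identity $ab^\prime+ba^\prime=ab+1$ modulo $a$. This gives $ba^\prime\equiv sa^\prime\equiv 1\pmod a$. Since $1\leq a^\prime<a$, uniqueness of the inverse gives $a^\prime=u$. Hence $\frac{a}{a-a^\prime}=\frac{a}{a-u}$.

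\emph{Step 2: identify $b-b^\prime$.} Rewrite the identity as $a(b-b^\prime)=ab-ab^\prime=ba^\prime-1=bu-1$. This gives
$$
\frac{b}{b-b^\prime}=\frac{ab}{bu-1}=\frac{a}{u}+\frac{a}{u(bu-1)}.
$$

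\emph{Step 3: show $\lfloor \frac{ab}{bu-1}\rfloor=\lfloor \frac{a}{u}\rfloor$.} I expect this to be the only point needing care. Since $\gcd(u,a)=1$ and $u>1$, the number $a/u$ is not an integer. Its distance to the next integer is therefore at least $1/u$, that is, $\lfloor a/u\rfloor+1-a/u\geq 1/u$. So it suffices to show $\frac{a}{u(bu-1)}<\frac1u$, i.e. $a<bu-1$. This holds because $b>a$ and $u\geq 2$ give $bu-1\geq 2b-1>a$. Hence adding $\frac{a}{u(bu-1)}$ to $a/u$ does not cross an integer, and $\lfloor \frac{b}{b-b^\prime}\rfloor=\lfloor a/u\rfloor$.

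\emph{Conclusion.} Combining the steps,
$$
B_{a,b}=\max\Big(\Big\lfloor \frac{a}{a-u}\Big\rfloor,\Big\lfloor\frac{a}{u}\Big\rfloor\Big)=\Big\lfloor\frac{a}{\min(u,a-u)}\Big\rfloor,
$$
since the floor function is monotone. Both $u$ and $a-u$ lie in $[1,a-1]$, so all expressions are well defined.
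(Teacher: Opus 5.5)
Your proposal is correct and follows essentially the same route as the paper. You identify $a^\prime=u$ by reducing $ab^\prime+ba^\prime=ab+1$ modulo $a$, rewrite $\frac{b}{b-b^\prime}=\frac{a}{u}+\frac{a}{u(bu-1)}$, and use $bu-1>a$ (the paper's $b-b^\prime>1$) to conclude that the correction term cannot change $\lfloor a/u\rfloor$. Your Step 3 simply makes explicit the ``easily seen'' step that the paper leaves implicit: the fractional part of $a/u$ is at most $1-1/u$, so a correction smaller than $1/u$ does not reach the next integer.
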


\begin{proof}
Let $a^\prime,b^\prime$ be the integers satisfying
$$
ab^\prime+ba^\prime=ab+1, 1\leq a^\prime<a, 1\leq b^\prime<b.
$$
Then $B_{a,b}=\max (\lfloor \frac{a}{a-a^\prime}\rfloor , \lfloor \frac{b}{b-b^\prime}\rfloor)$. Next, note that $ba^\prime=1 \pmod{a}$ and $b=s \pmod{a}$, which implies $u=a^\prime$ easily. 
Consequently, $B_{a,b}=\max (\lfloor \frac{a}{a-u}\rfloor , \lfloor \frac{b}{b-b^\prime}\rfloor)$. 

To proceed further, we note that $ab^\prime+bu=ab+1$, which implies 
$$
\frac{b}{b-b^\prime}=\frac{a}{u}+\frac{1}{u(b-b^\prime)}.
$$
As $u>1$ and $b-b^\prime=\frac{1}{a}(bu-1)>\frac{1}{a}((a+1)u-1)>1$, it is easily seen that 
$\lfloor \frac{b}{b-b^\prime}\rfloor =\lfloor \frac{a}{u} \rfloor$,
and $B_{a,b}= \lfloor \frac{a}{\min (u,a-u)}\rfloor$ follows immediately. This finishes the proof. 

\end{proof}

{\bf Proof of Theorem 1.1:}

\vspace{2mm}

For (1), by Lemma 2.1(2), we have $R_{p,q}=\ell(p,q)$. Consequently, 
$$
m_{p,q}=pq+B_{p,q}=pq+ R_{p,q}+\Delta_{p,q}=pq+\ell(p,q)+\Delta_{p,q},
$$
from which part (1) of Theorem 1.1 follows immediately by Lemma 2.3.

For (2), it suffices to prove that $R_{p,q}=\lfloor \frac{q-1}{\min(u,q-u)} \rfloor$. Consider first the case
where $p=q-1$. In this case, we have $R_{p,q}=p$. On the other hand, note that $u=q-1$, so that
$\lfloor \frac{q-1}{\min(u,q-u)} \rfloor=q-1$.  Hence $R_{p,q}=\lfloor \frac{q-1}{\min(u,q-u)} \rfloor$. 

Next, assume $p<q-1$. Let $a=q$, $b=q+p$. Then by Lemma 2.4, 
$B_{a,b}=\lfloor \frac{q}{\min(u,q-u)} \rfloor$. On the other hand, since $1<p<q-1$, 
$B_{a,b}=R_{a,b}$ by Lemma 2.3. It follows easily that
$$
R_{p,q}=R_{a,b}=B_{a,b}=\lfloor \frac{q}{\min(u,q-u)} \rfloor=\lfloor \frac{q-1}{\min(u,q-u)} \rfloor,
$$
where in the last equation, we used the fact that $1<u<q-1$ and that $\frac{q}{\min(u,q-u)}$ is not an integer. This finishes the proof of Theorem 1.1.


\section{Classification of the gap patterns: proof of Theorem 1.5}

Throughout this section, we adopt the following notations: let $C$ denote a rational bicuspidal curve belonging to one of the two families under consideration, and for $z=0$ or $z=\infty$, let $C_z$ be the rational unicuspidal curve obtained by resolving the singularity of $C$ at $z$, and finally, denote by $G_z$ the gap for the curve $C_z$. Our goal is to determine the values of $G_0$ and $G_\infty$. 

\vspace{2mm}

{\it Case (1)}: $C$ belongs to the family in $\C\P^2$, where $q-p>1$. With this understood, consider first the case of $z=0$. We observe that 
$C_0\cdot C_0=q^2-M_{p,q}=q(q-p)+ R_{p,q}$, which implies that
$$
G_0=m_{q-p,q}-C_0\cdot C_0=(q-p)q+B_{q-p,q}-q(q-p)-R_{p,q}=B_{q-p,q}-R_{q-p,q}=\Delta_{q-p,q}.
$$
Here we used the identity $R_{p,q}=R_{q-p,q}$ from Lemma 2.1(1). By a similar argument, $G_\infty=\Delta_{p,q}$. With this understood, it is easy to see that 
Theorem 1.5(1) follows immediately from Lemma 2.3, after we verify that case (i) and case (ii) are mutually exclusive.
To see this, note that $p$ and $q-p$ are co-prime, so that if $q+1$ is divisible by both $p$ and $q-p$,
it must be divisible by $p(q-p)$ as well. On the other hand, since $p>2$ and $q-p>2$, one has 
$p(q-p)> p+q-p+1=q+1$, which contradicts $q+1$ being divisible by $p(q-p)$. This finishes Case (1).

\vspace{2mm}

{\it Case (2)}: $C$ belongs to the family in $\C\P^1\times \C\P^1$. To ease the notation, we set
$Q:=2\alpha p-q$. Then note that $C\cdot C=2\alpha p^2=Qp+ qp$, which implies by a similar argument as in Case (1) the following identities:
$$
G_0=B_{p,Q}-R_{p,q}, \;\;\; G_\infty =B_{p,q}-R_{p,Q}. 
$$

With the preceding understood, we assume $p>2$ first. We divide the discussions according to the following mutually exclusive scenarios.

(i) $q-p=1$. In this case, $Q=2\alpha p-p-1$. It follows from Lemmas 2.3 and 2.1 that 
$$
B_{p,Q}=R_{p,Q}+1=R_{p,p-1}+1=p-1+1=p.
$$
On the other hand, $R_{p,q}=R_{p,p+1}=p$, which implies $G_0=0$. For the calculation of $G_\infty$, we note that by Lemma 2.3, $B_{p,q}=R_{p,q}+1=R_{p,p+1}+1=p+1$. On the other hand, 
$R_{p,Q}=p-1$, which implies $G_\infty=2$.

\vspace{2mm}

(ii) $q-p>1$ and $p | (q-1)$. By the same argument, $B_{p,Q}=p$, $R_{p,q}=p$, so that $G_0=0$.
On the other hand, by Lemmas 2.3 and 2.1 we have $B_{p,q}=R_{p,q}=R_{p,p+1}=p$ instead. With
$R_{p,Q}=p-1$ continuing to be true, we obtain $G_\infty =1$ in this case.

\vspace{2mm}

(iii) $p | (q+1)$. In this case, $Q=2\alpha p-kp+1$ for some $k\leq \alpha$. Similarly, using 
Lemmas 2.3 and 2.1, one has $B_{p,Q}=R_{p,Q}=R_{p,p+1}=p$. With $R_{p,q}=R_{p,p-1}=p-1$, it follows that $G_0=1$. For $G_\infty$, we note that by Lemmas 2.3 and 2.1, $B_{p,q}=R_{p,q}+1=R_{p,p-1}+1=p-1+1=p$.
With $R_{p,Q}=R_{p,p+1}=p$, we obtain $G_\infty=0$.

\vspace{2mm}

(iv) None of the above. In this case, it is easy to see that there are $r,s$ such that
$$
Q=r, q=s \pmod{p}, \;\; 1<r<p-1, 1<s<p-1.
$$
We observe that $r+s=p$. With this understood, one has by Lemmas 2.3 and 2.1,
$$
B_{p,Q}=R_{p,Q}=R_{r,p}, \; B_{p,q}=R_{p,q}=R_{s,p}. 
$$
With $R_{r,p}=R_{s,p}$ as $s=p-r$ (cf. Lemma 2.1(1)), it follows easily that $G_0=G_\infty=0$. 

It remains to consider the case where $p=2$. If $q=3$, one has 
$$
B_{p,Q}=R_{p,Q}=R_{2,4\alpha-3}=2 \mbox{ and } R_{p,q}=R_{2,3}=2,
$$
which implies $G_0=0$. On the other hand, $B_{p,q}=R_{p,q}+1=R_{2,3}+1=3$, and $R_{p,Q}=R_{2,4\alpha-3}=2$, which implies $G_\infty=1$. For the case of $q>3$, one continues to have $B_{p,Q}=2$ and  
$R_{p,q}=2$, which gives $G_0=0$, but for $G_\infty$, one has $B_{p,q}=R_{p,q}=2$ and 
$R_{p,Q}=2$, so that $G_\infty=0$. 

The proof of Theorem 1.5 is completed. 


\vspace{2mm}

{\Small Weimin Chen, University of Massachusetts, Amherst.\\
{\it E-mail:} wch@umass.edu 

\vspace{2mm}

{\Small Yusupjan Ouyang, University of Massachusetts, Amherst.\\
{\it E-mail:} yuzeouyang@umass.edu 

\vspace{2mm}

{\Small Sam Silver, University of Massachusetts, Amherst.\\
{\it E-mail:} samuelsilver@umass.edu


\begin{thebibliography}{}
\bibitem{C} W. Chen, {\em Small symplectic $4$-manifolds via contact gluing and some applications}, 
arXiv:2503.05932v6 [math.GT] 5 June 2026.
\bibitem{F} J. Fernández de Bobadilla, I. Luengo, A. Melle-Hernández, and A. Némethi, {\em Classification of rational unicuspidal projective curves whose singularities have one Puiseux pair}, in Proceedings of Sao Carlos Workshop 2004 Real and Complex Singularities, Series Trends in Mathematics (Birkhäuser, 2007), 31-46.
\bibitem{FZ} H. Flenner and M. Zaidenberg, {\em On a class of rational cuspidal plane curves},
Manuscripta Math. {\bf 89}(1996) no. 4, 439-459.
\bibitem{GS} M. Golla and L. Starkston, {\em Rational cuspital curves and symplectic fillings}, arXiv:2111.09700v1 [math.GT] 18 Nov 2021.
\bibitem{Hartsh} R. Hartshorne, {\em Curves with high self-intersection on algebraic surfaces}, 
Publ. Math. I.H.E.S., {\bf 36} (1969), 111-125.
\bibitem{LM} T.-J. Li and C.-Y. Mak, {\em Symplectic divisorial capping in dimension $4$}, 
Journal of Symplectic Geometry {\bf 17} (2019), no.6, 1835-1852.
\bibitem{Moe} T. K. Moe, {\em Cuspidal curves on Hirzebruch surfaces}, Ph.D. thesis, University of Oslo, 2013. 
\end{thebibliography}
\end{document}